\newtheorem{theorem}{Theorem}[section]
\newtheorem{lemma}[theorem]{Lemma}
\newcommand{\curl}{\mathrm{curl}}
\newcommand{\grad}{\mathrm{grad}}
\numberwithin{equation}{section}
\begin{document}
\title{Superconvergence of a nonconforming brick element for the quad-curl problem\thanks{This project is supported by NNSFC (No.~12201254) and NSF of Jiangsu Province (No.~BK20200902).}}

\author[a]{Xinchen Zhou\thanks{Corresponding author: dasazxc@gmail.com}}
\author[b]{Zhaoliang Meng}
\author[a]{Hexin Niu}

\affil[a]{\it \small School of Mathematical Sciences, Jiangsu University, Zhenjiang, 212013, China}
\affil[b]{\it \small School of Mathematical Sciences, Dalian
University of Technology, Dalian, 116024, China}
\maketitle

\begin{abstract}
This short note shows the superconvergence of an $H(\grad\,\curl)$-nonconforming brick element very recently introduced in \cite{Wang2023} for the quad-curl problem.
The supercloseness is based on proper modifications for both the interpolation and the discrete formulation, leading to an $O(h^2)$ superclose order in the discrete $\grad\,\curl$ norm. Moreover, we propose a suitable postprocessing method to ensure the global superconvergence. Numerical results verify our theory.
\\[6pt]
\textbf{Keywords:} superconvergence; nonconforming finite element; quad-curl problem
\end{abstract}

\section{Introduction}

The quad-curl problem arises in various models such as the inverse electromagnetic scattering theory and magnetohydrodynamics. Recently, some 3D $H(\grad\,\curl)$-conforming finite elements were proposed \cite{Zhang2020, Hu2022, Wang2021}. However, they suffer from a complicated element structure, and nonconforming finite elements are therefore considered.
In \cite{Zheng2011}, Zheng et al.~constructed the first nonconforming tetrahedral element for the model problem. Huang \cite{Huang2023} also proposed two tetrahedral elements, where one coincides with the element in \cite{Zheng2011} and the other is a simplified version with fewer edge degrees of freedom (DoFs).
Very recently, Wang et al.~\cite{Wang2023} introduced two nonconforming cubical elements with similar structures as in Huang's contribution, but a remarkable fact is that they are robust for singular perturbation problems as well. All the above nonconforming elements are $H(\curl)$-nonconforming, and we also notice that some $H(\grad\,\curl)$-nonconforming but $H(\curl)$-conforming elements were presented, e.g., \cite{Zhang2022}.

To improve the computational precision,
superconvergence has been widely studied and applied for elements for various problems.
Comprehensive reviews can be found in \cite{Lin1996,Chen2002}.
In comparison with conforming elements, superconvergence analysis for nonconforming ones is
generally more difficult, in the sense that an extra consistency error term must be considered, and moreover, it is also a problem if the canonical interpolation itself is superclose.
Up to now, there have been many superconvergence results of nonconforming elements for the second order elliptic problem \cite{Lin2005,Hu2016}, for the biharmonic problem \cite{Hu2016, Hu2016b, Mao2009}, for the Stokes problem \cite{Liu2008, Ye2002}, etc.
Very recently, for the quad-curl problem, some superconvergence results were obtained in \cite{Wang2023, Zhang2023}, which both dealt with $\curl\,\curl$-conforming elements in 2D.
However, such a result for nonconforming elements seems invalid in the literature.

In this work, we consider the superconvergence of the simpler element proposed in \cite{Wang2023}, where only one DoF per edge is involved.
To overcome the difficulties mentioned above, two modifications are taken into account.
The first modification concerns the consistency error.
In \cite{Linke2014}, Linke discovered that the consistency error can be effectively controlled by a velocity reconstruction for nonconforming divergence-free Stokes elements, so that the velocity error can be independent of the pressure. This reconstruction only modifies the righthand side of the discrete formulation,
therefore the implementation is simple, and the computational complexity is nearly unchanged.
For the quad-curl problem and the element considered in this work, we adopt a similar idea using the Nédélec interpolation \cite{Nedelec1980}, which ensures an $O(h^2)$ order of the consistency error.
The second modification aims at the interpolation error. We modify the canonical interpolation of the element based on \cite{Zhou2023} for the superconvergence of the Stokes element designed in \cite{Zhang2009}. This technique was employed by Chen \cite{Chen2002} for nonconforming rectangular biharmonic elements, and thereafter utilized in \cite{Mao2009} and \cite{Hu2016b} for triangular and 3D elements. As a consequence, the modified interpolation is precisely superconvergent with order two again.
Finally, owing to the idea in \cite{Lin2000} for Maxwell's equations, we design a suitable postprocessing method using a macroelement interpolation to obtain the global superconvergence.
Numerical results confirm our theory.

The rest of this work is arranged as follows. Notations and preliminaries are given in Section \ref{s2}. Then the superclose and superconvergence results are demonstrated in Sections \ref{s3} and \ref{s4}, respectively. We also discuss the numerical experiments in Section \ref{s5}.

\section{Nonconforming brick element for the quad-curl problem}
\label{s2}
Let $\Omega\subset\mathbb{R}^3$ be a polyhedral domain and $(x_1,x_2,x_3)$ be the coordinate of a point $\bm{x}\in\Omega$.
Standard notations in Sobolev Spaces are used throughout this work.
For a domain $D\subset\Omega$,
the norms and seminorms in the Sobolev spaces $H^s(D)$ are indicated by $\|\cdot\|_{s,D}$ and $|\cdot|_{s,D}$, respectively.
The space $H_0^1(D)$ is the subspace of $H^1(D)$ with vanishing trace on $\partial D$.
In particular, we set $L^2(D)=H^0(D)$ with the inner product $(\cdot,\cdot)_D$.
These notations are also valid for vector- and matrix-valued Sobolev spaces, and the
subscript $D$ will be omitted if the domain $D=\Omega$.
We write $\bm{n}$ as the unit outward normal vector on $\partial D$,
and write $\bm{t}$ as a fixed unit tangent vector along $D$ if $D$ is one-dimensional.
The notation $P_k(D)$ denotes the usual polynomial space over $D$ of degree no more than $k$.
Moreover, $Q_{i,j,k}(D)$ represents the space of polynomials such that the maximum degree in variables $x_1,x_2$ and $x_3$ are $i,j,k$, respectively.
For simplicity we set $Q_k(D)=Q_{k,k,k}(D)$.
Furthermore, the constant $C>0$ independent of the mesh size may be taken different values in different places.

Some notations of vector-valued spaces are also useful. We set
\[
\begin{aligned}
&H^1(\curl;D)=\{\bm{v}\in[H^1(D)]^3:~\curl\,\bm{v}\in[H^1(D)]^3\},\\
&H(\grad\,\curl;D)=\{\bm{v}\in[L^2(D)]^3:~\curl\,\bm{v}\in[H^1(D)]^3\},\\
&H_0(\grad\,\curl;D)=\{\bm{v}\in H(\grad\,\curl;D):~\bm{v}\times\bm{n}=\curl\,\bm{v}=\bm{0}\mbox{~on~}\partial D\}.
\end{aligned}
\]

Assume that $\Omega$ can be divided into a set of cubical cells, denoted by $\mathcal{T}_h$.
To simplify our statement, we assume that the partition is uniform, but we point out that this restriction is not intrinsic, and main conclusions of this work are also available for general quasi-uniform meshes.
We select the coordinate system such that for any cell $K\in\mathcal{T}_h$, the edges of $K$ are parallel to the coordinate axes.
The diagram of $K$ can be expressed as $h=(h_1^2+h_2^2+h_3^2)^{1/2}$, where $h_i$ is the length of the edge of $K$ in the $x_i$-direction for each $i=1,2$ or $3$. We also set $\bm{x}_K$ as the center of $K$.
The sets of faces and edges of $K$ and in $\mathcal{T}_h$ are written as $\mathcal{F}_K$, $\mathcal{E}_K$, $\mathcal{F}_h$ and $\mathcal{E}_h$, respectively.

In \cite{Zhang2009}, Zhang et al.~introduced the following nonconforming brick element for the Brinkman model: For $K\in\mathcal{T}_h$, the shape function space $\bm{W}_K$ is defined by
\[
\bm{W}_K=[P_1(K)]^3\oplus\left(\mathrm{span}\{x_2^2,x_3^2\}\times\mathrm{span}\{x_3^2,x_1^2\}
\times\mathrm{span}\{x_1^2,x_2^2\}\right),
\]
and the DoFs are face integrals of $K$.
Utilizing this element, Wang et al.~\cite{Wang2023} proposed two $H(\grad\,\curl)$-nonconforming elements for the singularly perturbed quad-curl problem. The simpler one has the shape function space
\[
\bm{V}_K=\nabla\,Q_1(K)\oplus(\bm{x}-\bm{x}_K)\times\bm{W}_K,
\]
and the DoFs are
\begin{equation}
\label{e: WK dof}
\int_E\bm{v}\cdot\bm{t}\,\mathrm{d}s,~\forall E\in\mathcal{E}_K, ~\int_F\curl\,\bm{v}\cdot\bm{t}_{j,F}\,\mathrm{d}F,~j=1,2,~\forall F\in\mathcal{F}_K,
~\forall \bm{v}\in\bm{V}_K,
\end{equation}
where $\bm{t}_{j,F}$ are two unit vectors parallel to two non-colinear edges of $F$, respectively.
The homogeneous global spaces $\bm{W}_h$ and $\bm{V}_h$ with respect to the above two elements can be naturally defined in a piecewise manner by requiring that all the DoFs of cells in $\mathcal{T}_h$ are single-valued, and all the boundary DoFs vanish.
Moreover, we shall also use the $Q_1$ element space
\[
Q_h=\{q_h\in H_0^1(\Omega):~q_h|_K\in Q_1(K),~\forall K\in\mathcal{T}_h\}.
\]
According to \cite{Wang2023}, these spaces have the following relations:
\begin{equation}
\label{e: space relation}
\nabla Q_h\subset\bm{V}_h \mbox{ and } \curl_h\bm{V}_h\subset\bm{W}_h,
\end{equation}
where $\curl_h$ is the piecewise-defined version of the $\curl$ operator in the sense that $\curl_h|_K=\curl$, $\forall K\in\mathcal{T}_h$.

For $\bm{f}\in [L^2(\Omega)]^3$ and $\mathrm{div}\,\bm{f}=0$, the quad-curl problem has the form: Find $\bm{u}$ such that
\begin{equation}
\label{e: the quad-curl problem}
\begin{aligned}
\curl^4\,\bm{u}&=\bm{f}~~~~\mbox{in }\Omega,\\
\mathrm{div}\,\bm{u}&=0\,~~~~\mbox{in }\Omega,\\
\bm{u}\times\bm{n}=\curl\,\bm{u}\times\bm{n}&=\bm{0}~~~~\mbox{on }\partial\Omega.
\end{aligned}
\end{equation}
Due to the identity $\curl^2\,\bm{u}=-\Delta\bm{u}+\nabla\mathrm{div}\,\bm{u}$, the first equation is equivalent to
\begin{equation}
\label{e: curl L curl}
-\curl\,\Delta\,\curl\,\bm{u}=\bm{f}~~~~\mbox{in }\Omega.
\end{equation}
A variational formulation is to seek $(\bm{u},p)\in H_0(\grad\,\curl;\Omega)\times H_0^1(\Omega)$ such that
\begin{equation}
\label{e: variational formulation}
\begin{aligned}
a(\bm{u},\bm{v})+b(\bm{v},p)&=(\bm{f},\bm{v}),~\forall\bm{v}\in H_0(\grad\,\curl;\Omega),\\
b(\bm{u},q)&=0,~\forall q\in H_0^1(\Omega),
\end{aligned}
\end{equation}
where $a(\bm{u},\bm{v})=(\nabla\curl\,\bm{u},\nabla\curl\,\bm{v})$
and $b(\bm{v},p)=(\bm{v},\nabla p)$. The standard discrete formulation \cite{Wang2023,Huang2023} can be written as: Find $(\bm{u}_h^0,p_h^0)\in\bm{V}_h\times Q_h$ fulfilling
\begin{equation}
\label{e: discrete formulation}
\begin{aligned}
a_h(\bm{u}_h^0,\bm{v}_h)+b(\bm{v}_h,p_h^0)&=(\bm{f},\bm{v}_h),~\forall\bm{v}\in \bm{V}_h,\\
b(\bm{u}_h^0,q_h)&=0,~\forall q_h\in Q_h,
\end{aligned}
\end{equation}
where $a_h(\bm{u}_h,\bm{v}_h)=(\nabla_h\curl_h\bm{u}_h,\nabla_h\curl_h\bm{v}_h)$ and $\nabla_h$ is
the piecewise-defined gradient operator.
Since $\mathrm{div}\,\bm{f}=0$ and (\ref{e: space relation}) holds, by taking $\bm{v}=\nabla p$ in the first equation in (\ref{e: variational formulation}) and taking $\bm{v}_h=\nabla p_h^0$ in the first equation in (\ref{e: discrete formulation}), one must have $p=p_h^0=0$. Furthermore, for piecewise sufficiently smooth functions $\bm{v}$, define the discrete norm or semi-norm
\[
|\bm{v}|_{1,h}^2=\sum_{K\in\mathcal{T}_h}|\bm{v}|_{1,K}^2,~
\|\bm{v}\|_{\grad\,\curl,h}^2=\|\bm{v}\|_0^2+\|\curl_h\bm{v}\|_0^2+|\curl_h\bm{v}|_{1,h}^2.
\]
Then by \cite{Wang2023,Huang2023}, if $\bm{u}\in [H^1(\Omega)]^3$ and $\curl\,\bm{u}\in[H^2(\Omega)]^3$, then the following a priori error estimate holds:
\begin{equation}
\label{e: orginal err}
\|\bm{u}-\bm{u}_h^0\|_{\grad\,\curl,h}\leq Ch(\|\curl\,\bm{u}\|_2+|\bm{u}|_1+\|\bm{f}\|_0).
\end{equation}

\section{Supercloseness}
\label{s3}
In comparison with conforming elements, superconvergence analysis for nonconforming ones is generally more difficult, in the sense that an extra consistency
error term must be carefully dealt with, and moreover, the canonical interpolation is often not superclose.
To overcome the first difficulty, inspired by \cite{Linke2014}, we shall modify the righthand side of (\ref{e: discrete formulation}).
To this end, recall the well-known Nédélec brick element \cite{Nedelec1980} of the lowest order, whose local shape function space is
\[
\bm{V}^C_K=Q_{0,1,1}(K)\times Q_{1,0,1}(K)\times Q_{1,1,0}(K),
\]
and the local DoFs are precisely the edge DoFs in (\ref{e: WK dof}). We also write the canonical interpolation operator preserving the above DoFs as $\bm{I}^C_K$. Let $\bm{V}^C_h$ be the corresponding homogeneous global spaces and let $\bm{I}^C_h$ be the global version of $\bm{I}^C_K$ such that $\bm{I}^C_h|_K=\bm{I}^C_K$, then the modified formulation reads: Find $(\bm{u}_h,p_h)\in\bm{V}_h\times Q_h$ such that
\begin{equation}
\label{e: modified discrete formulation}
\begin{aligned}
a_h(\bm{u}_h,\bm{v}_h)+b(\bm{v}_h,p_h)&=(\bm{f},\bm{I}^C_h\bm{v}_h),~\forall\bm{v}\in \bm{V}_h,\\
b(\bm{u}_h,q_h)&=0,~\forall q_h\in Q_h.
\end{aligned}
\end{equation}
The finite element matrices of (\ref{e: discrete formulation}) and (\ref{e: modified discrete formulation}) are precisely the same, and therefore the wellposedness of (\ref{e: modified discrete formulation}) is a direct consequence of that of (\ref{e: discrete formulation}).
Moreover, since $\nabla Q_h\subset\bm{V}_h^C$ (see e.g.~\cite{Monk2003}), taking $\bm{v}_h=\nabla p_h$ in the first equation in (\ref{e: modified discrete formulation}) we get
\[
\|\nabla p_h\|_0^2=(\bm{f},\bm{I}^C_h\nabla p_h)=(\bm{f},\nabla p_h)=0,
\]
and thus again we arrive at $p_h=0$.

To overcome the second difficulty, we modify the canonical interpolations with respect to $\bm{W}_h$ and $\bm{V}_h$. For $K\in\mathcal{T}_h$, we design the local interpolation $\bm{\Pi}_K:~[H^3(K)]^3\rightarrow\bm{W}_K$ by setting
\[
\begin{aligned}
&\int_F\bm{\Pi}_K\bm{w}\cdot\bm{t}_{j,F}\,\mathrm{d}F
=\int_F\left(\bm{w}\cdot\bm{t}_{j,F}+\frac{h_{k_{j,F}}^2}{12}\frac{\partial^2(\bm{w}\cdot\bm{t}_{j,F})}{\partial x_{k_{j,F}}^2}\right)\,\mathrm{d}F,~j=1,2,~\forall F\in\mathcal{F}_K,\\
&\int_F\bm{\Pi}_K\bm{w}\cdot\bm{n}_F\,\mathrm{d}F=\int_F\bm{w}\cdot\bm{n}_F\,\mathrm{d}F,
~\forall F\in\mathcal{F}_K,
\end{aligned}
\]
where $\bm{n}_F$ is a fixed unit normal of $F$, and $k_{j,F}$ is taken $1$, $2$ or $3$ such that $\bm{t}_{j,F}$ is parallel to the $x_{k_{j,F}}$-axis. For example, if $\bm{t}_{j,F}=(1,0,0)^T$, then $\bm{t}_{j,F}$ is parallel to the $x_1$-axis, and therefore $k_{j,F}=1$.
This operator also works for the superconvergence of $\bm{W}_h$ for the Stokes problem with piecewise constant pressure \cite{Zhou2023}.
Similarly, if $\bm{v}\in [H^1(K)]^3$ and $\curl\,\bm{v}\in [H^3(K)]^3$, then the local interpolation $\bm{I}_K\bm{v}\in \bm{V}_K$ can be determined by
\[
\begin{aligned}
&\int_F\curl\,\bm{I}_K\bm{v}\cdot\bm{t}_{j,F}\,\mathrm{d}F
=\int_F\left(\curl\,\bm{v}\cdot\bm{t}_{j,F}+\frac{h_{k_{j,F}}^2}{12}\frac{\partial^2(\curl\,\bm{v}\cdot\bm{t}_{j,F})}{\partial x_{k_{j,F}}^2}\right)\,\mathrm{d}F,~j=1,2,~\forall F\in\mathcal{F}_K,\\
&\int_E\bm{I}_K\bm{v}\cdot\bm{t}\,\mathrm{d}s=\int_E\bm{v}\cdot\bm{t}\,\mathrm{d}s,
~\forall E\in\mathcal{E}_K.
\end{aligned}
\]
The global interpolations $\bm{\Pi}_h$ and $\bm{I}_h$ are naturally piecewise defined: $\bm{\Pi}_h|_K=\bm{\Pi}_K$ and $\bm{I}_h|_K=\bm{I}_K$, $\forall K\in\mathcal{T}_h$. We must point out that these interpolations are well-defined and continuous with the claimed regularity due to the trace theorem and Lemma 3.1 in \cite{Wang2023}. In addition, if $\bm{w}\in [H_0^1(\Omega)]^3$ and $\bm{v}\in H_0(\grad\,\curl;\Omega)$, then
$\bm{\Pi}_h\bm{w}\in\bm{W}_h$ and $\bm{I}_h\bm{v}\in\bm{V}_h$ due to the vanishing trace of $\bm{w}$ or $\curl\,\bm{v}$ over $\partial\Omega$.

The supercloseness is based on the following five lemmas.

\begin{lemma}
\label{lemma: WK interp superclose}
For all $\bm{w}\in [H^3(K)]^3$ and $K\in\mathcal{T}_h$, one has
\[
|(\nabla(\bm{w}-\bm{\Pi}_K\bm{w}),\nabla\bm{w}_h)_K|\leq Ch^2|\bm{w}|_{3,K}|\bm{w}_h|_{1,K},
~\forall \bm{w}_h\in\bm{W}_K.
\]
\end{lemma}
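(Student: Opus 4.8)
The plan is to establish, for every fixed $\bm{w}_h\in\bm{W}_K$, that the linear functional $\bm{w}\mapsto(\nabla(\bm{w}-\bm{\Pi}_K\bm{w}),\nabla\bm{w}_h)_K$ vanishes on $[P_2(K)]^3$, and then to conclude by a scaled Bramble--Hilbert argument; the appearance of $|\bm{w}|_{3,K}$ --- rather than the weaker $|\bm{w}|_{2,K}$ --- on the right-hand side is exactly what this vanishing buys, and it rests on the $h_{k_{j,F}}^2/12$ correction built into $\bm{\Pi}_K$. Let $\bm{e}_i$ be the $i$-th standard unit vector, let $F_i^+,F_i^-$ be the two faces of $K$ whose outward normals are parallel to $\bm{e}_i$, set $\bm{\rho}=\bm{w}-\bm{\Pi}_K\bm{w}$, and write $w_i,\rho_i$ for components. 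The starting point is the structural remark that each component $(\bm{w}_h)_i$ lies in $P_1(K)\oplus\mathrm{span}\{x_l^2:~l\neq i\}$, so $\partial_{x_j}(\bm{w}_h)_i$ is a polynomial of degree at most one in the single variable $x_j$, and is constant when $j=i$. Hence $(\nabla\bm{\rho},\nabla\bm{w}_h)_K=\sum_{i,j=1}^3(\partial_{x_j}\rho_i,\partial_{x_j}(\bm{w}_h)_i)_K$, and the diagonal terms $j=i$ vanish identically: there $\partial_{x_i}(\bm{w}_h)_i$ is a constant, so $(\partial_{x_i}\rho_i,\partial_{x_i}(\bm{w}_h)_i)_K$ is a multiple of $\int_K\partial_{x_i}\rho_i\,\mathrm{d}x=\int_{F_i^+}\rho_i\,\mathrm{d}F-\int_{F_i^-}\rho_i\,\mathrm{d}F$, and on $F_i^\pm$ the \emph{normal} degree of freedom of $\bm{\Pi}_K$ is unmodified, so each of these face integrals is $0$.

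For an off-diagonal term ($j\neq i$) I would write $\partial_{x_j}(\bm{w}_h)_i=\delta_0+2\delta_2 x_j$ and integrate by parts in $x_j$. This leaves boundary integrals over $F_j^\pm$, plus a volume term $-2\delta_2\int_K\rho_i\,\mathrm{d}x$. On $F_j^\pm$ the vector $\bm{e}_i$ is a tangent direction, so the \emph{modified tangential} degree of freedom gives exactly $\int_{F_j^\pm}\rho_i\,\mathrm{d}F=-\tfrac{h_i^2}{12}\int_{F_j^\pm}\partial_{x_i}^2 w_i\,\mathrm{d}F$; summing the two faces turns the boundary part into $-\tfrac{h_i^2}{12}$ times a volume integral of $\partial_{x_i}^2 w_i$ and its first derivatives. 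For $\int_K\rho_i\,\mathrm{d}x$, I would invoke the elementary identity $\int_K(\bm{v})_i\,\mathrm{d}x=\tfrac{h_i}{2}\bigl(\int_{F_i^+}(\bm{v})_i\,\mathrm{d}F+\int_{F_i^-}(\bm{v})_i\,\mathrm{d}F\bigr)$, valid for every $\bm{v}\in\bm{W}_K$ (because $(\bm{v})_i$ is affine in $x_i$) and hence for $\bm{\Pi}_K\bm{w}$, together again with the unmodified normal degrees of freedom on $F_i^\pm$, to obtain
\[
\int_K\rho_i\,\mathrm{d}x=\int_K w_i\,\mathrm{d}x-\frac{h_i}{2}\Bigl(\int_{F_i^+}w_i\,\mathrm{d}F+\int_{F_i^-}w_i\,\mathrm{d}F\Bigr),
\]
that is, precisely the tensorised one-dimensional trapezoidal-rule residual of $w_i$ in the $x_i$-direction. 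Collecting terms, $(\partial_{x_j}\rho_i,\partial_{x_j}(\bm{w}_h)_i)_K$ equals $-\tfrac{h_i^2}{12}$ times that volume integral minus $2\delta_2$ times that residual. For $\bm{w}\in[P_2(K)]^3$ the derivatives of $\partial_{x_i}^2 w_i$ vanish, the residual of the then-quadratic $w_i$ equals $-\tfrac{h_i^2}{12}\int_K\partial_{x_i}^2 w_i\,\mathrm{d}x$, and the two $\tfrac{h_i^2}{12}$-contributions cancel. Thus $(\nabla\bm{\rho},\nabla\bm{w}_h)_K=0$ for all $\bm{w}\in[P_2(K)]^3$.

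To conclude, I would use linearity: for any $\bm{q}\in[P_2(K)]^3$, $(\nabla(\bm{w}-\bm{\Pi}_K\bm{w}),\nabla\bm{w}_h)_K=(\nabla(\bm{g}-\bm{\Pi}_K\bm{g}),\nabla\bm{w}_h)_K$ with $\bm{g}=\bm{w}-\bm{q}$. Taking $\bm{q}$ to be a degree-two averaged Taylor polynomial of $\bm{w}$ on $K$ gives $|\bm{g}|_{m,K}\le Ch^{3-m}|\bm{w}|_{3,K}$ for $m=0,1,2,3$; since $\bm{\Pi}_K$ reproduces $[P_1(K)]^3$ (the correction vanishes on affine functions) and its degrees of freedom are bounded in the relevant norms by the trace theorem, a standard scaling argument gives $|\bm{g}-\bm{\Pi}_K\bm{g}|_{1,K}\le Ch^2|\bm{w}|_{3,K}$. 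Cauchy--Schwarz then yields $|(\nabla(\bm{w}-\bm{\Pi}_K\bm{w}),\nabla\bm{w}_h)_K|\le|\bm{g}-\bm{\Pi}_K\bm{g}|_{1,K}\,|\bm{w}_h|_{1,K}\le Ch^2|\bm{w}|_{3,K}|\bm{w}_h|_{1,K}$.

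The step I expect to be the main obstacle is the off-diagonal computation: arranging the integration by parts so the leftover term is cleanly recognised as a one-dimensional trapezoidal-quadrature residual, and then verifying that for quadratic data this residual is annihilated \emph{exactly}, not merely to leading order, by the $h^2/12$ term in $\bm{\Pi}_K$. The delicate part is the bookkeeping --- deciding which degree of freedom (the normal one, or the $\bm{e}_i$-tangential one) is available on a given face, and keeping track of the attendant signs; once the vanishing on $[P_2(K)]^3$ is in hand, the Bramble--Hilbert and scaling estimates are routine.
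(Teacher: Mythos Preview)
Your proposal is correct and follows the same two-step skeleton as the paper's proof: first establish that the functional vanishes on $[P_2(K)]^3$, then conclude by a Bramble--Hilbert/scaling argument. The paper simply asserts the vanishing ``by a direct computation'' on the reference cell, whereas you actually carry out that computation on the physical cell, identifying the $h_{k_{j,F}}^2/12$ correction as the exact compensator of a one-dimensional trapezoidal residual; this adds insight but is not a different route.
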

\begin{proof}
Since we have assumed that the mesh is uniform, let $\widehat{K}=\{\widehat{\bm{x}}=(\bm{x}-\bm{x}_K)/h, \bm{x}\in K\}$ be the reference cell independent of $K$
and $\widehat{\bm{w}}(\widehat{\bm{x}})=\bm{w}(\bm{x})$ for any function $\bm{w}$ defined over $K$.
Then it can be checked by a direct computation that
\[
\begin{aligned}
&(\nabla(\widehat{\bm{w}}-\bm{\Pi}_{\widehat{K}}\widehat{\bm{w}}),\nabla\widehat{\bm{w}}_h)_{\widehat{K}}=0,
~\forall\widehat{\bm{w}}\in [P_2(\widehat{K})]^3,~\forall\widehat{\bm{w}}_h\in\bm{W}_{\widehat{K}},\\
&|(\nabla(\widehat{\bm{w}}-\bm{\Pi}_{\widehat{K}}\widehat{\bm{w}}),\nabla\widehat{\bm{w}}_h)_{\widehat{K}}|\leq C\|\widehat{\bm{w}}\|_{3,\widehat{K}}|\widehat{\bm{w}}_h|_{1,\widehat{K}},~\forall\widehat{\bm{w}}\in[H^3(\widehat{K})]^3, ~\forall\widehat{\bm{w}}_h\in\bm{W}_{\widehat{K}}.
\end{aligned}
\]
According to the definition of $\bm{W}_K$, one has $\bm{W}_{\widehat{K}}=\mathrm{span}\{\widehat{\bm{w}},~\bm{w}\in\bm{W}_K\}$.
Moreover, by checking the DoFs of $\widehat{\bm{\Pi}_{K}\bm{w}}$ and $\bm{\Pi}_{\widehat{K}}\widehat{\bm{w}}$ we find $\widehat{\bm{\Pi}_{K}\bm{w}}=\bm{\Pi}_{\widehat{K}}\widehat{\bm{w}}$.
Therefore a scaling argument and the Bramble--Hilbert lemma show that
\[
\begin{aligned}
|(\nabla(\bm{w}-\bm{\Pi}_K\bm{w}),\nabla\bm{w}_h)_K|
&\leq Ch\inf_{\widehat{\bm{q}}\in [P_2(\widehat{K})]^3}|(\nabla((\widehat{\bm{w}}-\widehat{\bm{q}})-\bm{\Pi}_{\widehat{K}}(\widehat{\bm{w}}-\widehat{\bm{q}})),\nabla \widehat{\bm{w}}_h)_{\widehat{K}}|\\
&\leq Ch\inf_{\widehat{\bm{q}}\in [P_2(\widehat{K})]^3}
\|\bm{\widehat{w}}-\widehat{\bm{q}}\|_{3,\widehat{K}}|\widehat{\bm{w}}_h|_{1,\widehat{K}}\\
&\leq Ch|\bm{\widehat{w}}|_{3,\widehat{K}}|\widehat{\bm{w}}_h|_{1,\widehat{K}}
= Ch^2|\bm{w}|_{3,K}|\bm{w}_h|_{1,K},
\end{aligned}
\]
and the proof is done.
\end{proof}

\begin{lemma}
\label{lemma: bh superclose}
For all $\bm{v}$ such that $\bm{v}\in [H^2(K)]^3$ and $\curl\,\bm{v}\in[H^3(K)]^3$, it holds
\[
|(\bm{v}-\bm{I}_K\bm{v},\nabla q_h)_K|\leq Ch^2(|\bm{v}|_{2,K}+h|\curl\,\bm{v}|_{2,K}+h^2|\curl\,\bm{v}|_{3,K})|q_h|_{1,K},~\forall q_h\in Q_1(K), ~\forall K\in\mathcal{T}_h.
\]
\end{lemma}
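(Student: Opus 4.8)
The strategy is a scaling argument to a fixed reference cell followed by a Deny--Lions (generalized Bramble--Hilbert) estimate, in the spirit of the proof of Lemma \ref{lemma: WK interp superclose}. Let $\widehat K=\{\widehat{\bm x}=(\bm x-\bm x_K)/h:\bm x\in K\}$, a fixed box centred at the origin, and set $\widehat{\bm v}(\widehat{\bm x})=\bm v(\bm x)$, $\widehat q(\widehat{\bm x})=q_h(\bm x)$. A change of variables gives $(\bm v-\bm I_K\bm v,\nabla q_h)_K=h^2(\widehat{\bm v}-\bm I_{\widehat K}\widehat{\bm v},\widehat{\nabla}\widehat q)_{\widehat K}$, where $\bm I_{\widehat K}$ is the interpolation on $\widehat K$ defined with the rescaled weights $\widehat h_{k_{j,F}}^2/12$; as in Lemma \ref{lemma: WK interp superclose}, one verifies $\widehat{\bm I_K\bm v}=\bm I_{\widehat K}\widehat{\bm v}$ by checking that the curl and the second-derivative correction rescale consistently with the remaining powers of $h$. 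Tracking the scaling of the seminorms, the claim reduces to the reference-cell estimate
\[
|(\widehat{\bm v}-\bm I_{\widehat K}\widehat{\bm v},\widehat{\nabla}\widehat q)_{\widehat K}|\le C(|\widehat{\bm v}|_{2,\widehat K}+|\widehat{\curl}\,\widehat{\bm v}|_{2,\widehat K}+|\widehat{\curl}\,\widehat{\bm v}|_{3,\widehat K})|\widehat q|_{1,\widehat K}.
\]

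Next I would separate the two error mechanisms in $\bm I_{\widehat K}$. Let $\bm I_{\widehat K}^{0}$ be the original (unmodified) canonical interpolation of \cite{Wang2023} onto $\bm V_{\widehat K}$, and let $\widehat{\bm B}$ be the linear operator sending a field $\widehat{\bm g}$ to the element of $\bm V_{\widehat K}$ with vanishing edge DoFs and curl-face DoFs $\int_F\frac{\widehat h_{k_{j,F}}^2}{12}\frac{\partial^2(\widehat{\bm g}\cdot\bm t_{j,F})}{\partial\widehat x_{k_{j,F}}^2}\,\mathrm{d}\widehat F$. By unisolvence $\bm I_{\widehat K}\widehat{\bm v}=\bm I_{\widehat K}^{0}\widehat{\bm v}+\widehat{\bm B}(\widehat{\curl}\,\widehat{\bm v})$, hence
\[
(\widehat{\bm v}-\bm I_{\widehat K}\widehat{\bm v},\widehat{\nabla}\widehat q)_{\widehat K}=\underbrace{(\widehat{\bm v}-\bm I_{\widehat K}^{0}\widehat{\bm v},\widehat{\nabla}\widehat q)_{\widehat K}}_{M_1}-\underbrace{(\widehat{\bm B}(\widehat{\curl}\,\widehat{\bm v}),\widehat{\nabla}\widehat q)_{\widehat K}}_{M_2};
\]
throughout one uses that $\widehat{\nabla}\widehat q$ lies in $\bm V^C_{\widehat K}$, the lowest-order N\'ed\'elec space on $\widehat K$.

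The core of the argument is two exactness facts. First, $M_1=0$ for every $\widehat{\bm v}\in[P_1(\widehat K)]^3$: decompose the constant Jacobian of such a $\widehat{\bm v}$ into its skew part (a rigid rotation $\bm\omega\times\widehat{\bm x}$, which lies in $\bm V_{\widehat K}$ because $\widehat{\bm x}\times(\text{constant})\subset\widehat{\bm x}\times\bm W_{\widehat K}\subset\bm V_{\widehat K}$), its symmetric off-diagonal part (the gradient of a bilinear polynomial, hence in $\nabla Q_1(\widehat K)\subset\bm V_{\widehat K}$), and its symmetric diagonal part $\sum_i a_i\widehat x_i\bm e_i$. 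On the first two pieces $\bm I_{\widehat K}^{0}$ acts as the identity, so they drop out; for the last, $\bm I_{\widehat K}^{0}(\sum_i a_i\widehat x_i\bm e_i)=\bm 0$ because every edge integral vanishes by oddness and $\widehat{\curl}(\widehat x_i\bm e_i)=\bm 0$, while $(\sum_i a_i\widehat x_i\bm e_i,\widehat{\nabla}\widehat q)_{\widehat K}=\sum_i a_i\int_{\widehat K}\widehat x_i\,\partial_{\widehat x_i}\widehat q\,\mathrm{d}\widehat{\bm x}=0$ since $\partial_{\widehat x_i}\widehat q$ does not depend on $\widehat x_i$ and $\widehat K$ is symmetric about the origin (constants are handled like the off-diagonal piece). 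Second, $M_2=0$ whenever $\widehat{\curl}\,\widehat{\bm v}\in[P_1(\widehat K)]^3$, because then the correction $\tfrac{\widehat h_{k_{j,F}}^2}{12}\partial^2(\widehat{\curl}\,\widehat{\bm v}\cdot\bm t_{j,F})/\partial\widehat x_{k_{j,F}}^2$ vanishes identically, so $\widehat{\bm B}(\widehat{\curl}\,\widehat{\bm v})=\bm 0$.

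Granted these, the conclusion is routine. Viewed as a functional of $\widehat{\bm v}$, $M_1$ is bounded on $\{\widehat{\bm v}:\widehat{\bm v}\in[H^2(\widehat K)]^3,\ \widehat{\curl}\,\widehat{\bm v}\in[H^2(\widehat K)]^3\}$ and vanishes on $[P_1(\widehat K)]^3$, so Deny--Lions gives $|M_1|\le C(|\widehat{\bm v}|_{2,\widehat K}+|\widehat{\curl}\,\widehat{\bm v}|_{2,\widehat K})|\widehat q|_{1,\widehat K}$; viewed as a functional of $\widehat{\bm g}=\widehat{\curl}\,\widehat{\bm v}\in[H^3(\widehat K)]^3$, $M_2$ is bounded and vanishes on $[P_1(\widehat K)]^3$, giving $|M_2|\le C(|\widehat{\curl}\,\widehat{\bm v}|_{2,\widehat K}+|\widehat{\curl}\,\widehat{\bm v}|_{3,\widehat K})|\widehat q|_{1,\widehat K}$. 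Adding these and undoing the scaling yields the asserted bound. I expect the real obstacle to be the first exactness fact: one must notice that $[P_1(\widehat K)]^3\not\subset\bm V_{\widehat K}$ and carry out the Jacobian splitting, checking by hand which linear vector fields actually lie in the shape function space and disposing of the leftover ``diagonal'' directions through the parity of the reference cube; the other steps simply mirror the template already established in Lemma \ref{lemma: WK interp superclose}.
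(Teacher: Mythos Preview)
Your proposal is correct and follows essentially the same route as the paper: split $\bm{I}_K=\bm{I}_K^0+\text{correction}$, establish that $(\bm v-\bm I_K^0\bm v,\nabla q_h)_K$ vanishes for $\bm v\in[P_1(K)]^3$, and handle the correction separately. The paper states the $P_1$-exactness as a ``direct computation'' while you spell it out via the symmetric/skew Jacobian decomposition and the parity of the centered cube; and for the correction $M_2$ the paper bounds it directly by the trace theorem on the face integrals of $\partial^2(\curl\bm v\cdot\bm t_{j,F})/\partial x_{k_{j,F}}^2$, whereas you run a second Bramble--Hilbert step---both yield the same $Ch^3(|\curl\bm v|_{2,K}+h|\curl\bm v|_{3,K})|q_h|_{1,K}$ after scaling. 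These are stylistic variations, not different arguments.
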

\begin{proof}
Denote by $\bm{I}_K^0$ the canonical interpolation operator with respect to $\bm{V}_K$, namely all the DoFs in (\ref{e: WK dof}) are strictly preserved.
Then over the reference cell $\widehat{K}$, by a direct computation and the continuity of $\bm{I}_{\widehat{K}}^0$ (Lemma 3.1 in \cite{Wang2023}), one obtains for any $\widehat{q}_h\in Q_1(\widehat{K})$,
\[
\begin{aligned}
(\widehat{\bm{v}}-\bm{I}_{\widehat{K}}^0\widehat{\bm{v}},\nabla \widehat{q}_h)_{\widehat{K}}&=0,
~\forall\widehat{\bm{v}}\in [P_1(\widehat{K})]^3,\\
|(\widehat{\bm{v}}-\bm{I}_{\widehat{K}}^0\widehat{\bm{v}},\nabla \widehat{q}_h)_{\widehat{K}}|&\leq C(\|\widehat{\bm{v}}\|_{1,\widehat{K}}+\|\curl\,\widehat{\bm{v}}\|_{1,\widehat{K}})|\widehat{q}_h|_{1,\widehat{K}},
~\forall\widehat{\bm{v}}\in H^1(\curl;\widehat{K}),
\end{aligned}
\]
which gives in a similar fashion as in the proof of Lemma \ref{lemma: WK interp superclose} that
\begin{equation}
\label{e: IK0 superclose}
|(\bm{v}-\bm{I}_K^0\bm{v},\nabla q_h)_K|\leq Ch^2|\bm{v}|_{2,K}|q_h|_{1,K},
~\forall\bm{v}\in [H^2(K)]^3,~\forall q_h\in Q_1(K).
\end{equation}
On the other hand, by the definition of $\bm{I}_K$, a scaling argument and the trace theorem,
one has
\begin{equation}
\label{e: modified interp err}
\begin{aligned}
|(\bm{I}_K^0\bm{v}-\bm{I}_K\bm{v},\nabla q_h)_K|
&\leq Ch^2\sum_{\widehat{F}\in\mathcal{F}_{\widehat{K}}}\sum_{j=1}^2
\left|\int_{\widehat{F}}\frac{\partial^2(\curl\,\widehat{\bm{v}}\cdot\bm{t}_{j,\widehat{F}})}{\partial \widehat{x}_{k_{j,\widehat{F}}}^2}\,\mathrm{d}\widehat{F}\right||\widehat{q}_h|_{1,\widehat{K}}\\
&\leq Ch^2(|\curl\,\widehat{\bm{v}}|_{2,\widehat{K}}+|\curl\,\widehat{\bm{v}}|_{3,\widehat{K}})|\widehat{q}_h|_{1,\widehat{K}}\\
&\leq Ch^3(|\curl\,\bm{v}|_{2,K}+h|\curl\,\bm{v}|_{3,K})|q_h|_{1,K}.
\end{aligned}
\end{equation}
A summation of (\ref{e: IK0 superclose}) and (\ref{e: modified interp err}) completes the proof.
\end{proof}

\begin{lemma}
\label{lemma: commute}
The following commutative diagram holds: if $\bm{v}\in [H^1(K)]^3$ and $\curl\,\bm{v}\in [H^3(K)]^3$, then
\[
\bm{\Pi}_K\curl\,\bm{v}=\curl\,\bm{I}_K\bm{v},~\forall K\in\mathcal{T}_h.
\]
\end{lemma}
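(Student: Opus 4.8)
The plan is to verify the identity through the defining degrees of freedom. Both $\bm{\Pi}_K\curl\,\bm{v}$ and $\curl\,\bm{I}_K\bm{v}$ lie in $\bm{W}_K$ (the latter by the space relation $\curl_h\bm{V}_h\subset\bm{W}_h$, or rather its local analogue $\curl\,\bm{V}_K\subset\bm{W}_K$), so it suffices to show that they share the same values under the unisolvent set of DoFs for $\bm{W}_K$, namely the tangential face integrals $\int_F(\cdot)\cdot\bm{t}_{j,F}\,\mathrm{d}F$ for $j=1,2$ and the normal face integrals $\int_F(\cdot)\cdot\bm{n}_F\,\mathrm{d}F$, over all $F\in\mathcal{F}_K$.

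First I would check the tangential DoFs. By the definition of $\bm{\Pi}_K$ applied to the function $\curl\,\bm{v}$,
\[
\int_F\bm{\Pi}_K\curl\,\bm{v}\cdot\bm{t}_{j,F}\,\mathrm{d}F
=\int_F\left(\curl\,\bm{v}\cdot\bm{t}_{j,F}+\frac{h_{k_{j,F}}^2}{12}\frac{\partial^2(\curl\,\bm{v}\cdot\bm{t}_{j,F})}{\partial x_{k_{j,F}}^2}\right)\,\mathrm{d}F,
\]
which is exactly the prescribed value of $\int_F\curl\,\bm{I}_K\bm{v}\cdot\bm{t}_{j,F}\,\mathrm{d}F$ in the definition of $\bm{I}_K$. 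So the tangential DoFs match trivially, by construction — the modification terms were chosen precisely to make this consistent.

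The substantive step is the normal DoFs: I must show $\int_F\curl\,\bm{I}_K\bm{v}\cdot\bm{n}_F\,\mathrm{d}F=\int_F\curl\,\bm{v}\cdot\bm{n}_F\,\mathrm{d}F$ for every face $F$. Here I would use Stokes' theorem on the planar face $F$: for a smooth vector field $\bm{g}$, $\int_F\curl\,\bm{g}\cdot\bm{n}_F\,\mathrm{d}F=\int_{\partial F}\bm{g}\cdot\bm{t}\,\mathrm{d}s$, the circulation around the four edges of $F$. Applying this with $\bm{g}=\bm{I}_K\bm{v}$ and with $\bm{g}=\bm{v}$, the claim reduces to $\int_{\partial F}\bm{I}_K\bm{v}\cdot\bm{t}\,\mathrm{d}s=\int_{\partial F}\bm{v}\cdot\bm{t}\,\mathrm{d}s$, i.e. to equality of the edge integrals $\int_E\bm{I}_K\bm{v}\cdot\bm{t}\,\mathrm{d}s=\int_E\bm{v}\cdot\bm{t}\,\mathrm{d}s$ over the edges $E\subset\partial F$ — but this is exactly the second set of defining conditions for $\bm{I}_K$. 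One technical point to address is that $\bm{I}_K\bm{v}\in\bm{V}_K$ need not be smooth enough for the classical Stokes formula on its own; however $\bm{V}_K$ consists of polynomials, so this is not an issue, and the formula $\int_F\curl\,\bm{p}\cdot\bm{n}_F\,\mathrm{d}F=\int_{\partial F}\bm{p}\cdot\bm{t}\,\mathrm{d}s$ holds for any polynomial $\bm{p}$.

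I do not anticipate a serious obstacle here — the lemma is essentially a bookkeeping statement that the two modified interpolations were designed to be compatible. The only place requiring a little care is confirming that the tangential and normal face integrals together form a unisolvent set for $\bm{W}_K$ (so that agreement on all of them forces equality of the two elements of $\bm{W}_K$); this unisolvence is inherited from the DoF structure of the Zhang--Zhang--Zhou brick element recalled in Section \ref{s2}, whose DoFs are the face integrals of $K$, and one just has to note that integrating against $\bm{t}_{1,F},\bm{t}_{2,F},\bm{n}_F$ is the same information as the three Cartesian components of $\int_F(\cdot)\,\mathrm{d}F$. Once unisolvence is in hand, the proof is complete.
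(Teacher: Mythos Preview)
Your proposal is correct and follows exactly the paper's approach: both sides lie in $\bm{W}_K$ since $\curl\,\bm{V}_K\subset\bm{W}_K$, the tangential face DoFs agree by the very definitions of $\bm{\Pi}_K$ and $\bm{I}_K$, and the normal face DoFs agree via the Stokes formula on each face together with the edge DoFs preserved by $\bm{I}_K$. The paper's proof is just a two-sentence summary of precisely these steps.
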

\begin{proof}
By (\ref{e: space relation}) we have $\curl\,\bm{V}_K\subset\bm{W}_K$,
therefore it suffices to show that all the DoFs of $\bm{\Pi}_K\curl\,\bm{v}$ and $\curl\,\bm{I}_K\bm{v}$ with respect to $\bm{W}_K$ are the same.
This is indeed a direct consequence of the Stokes formula (for the normal part) and the definitions
of $\bm{\Pi}_K$ and $\bm{I}_K$ (for the tangent part).
\end{proof}

\begin{lemma}
\label{lemma: consistency Wh}
For any $\bm{\phi}\in [H^2(\Omega)]^3$ and $\bm{w}_h\in\bm{W}_h$,
one has
\[
\sum_{K\in\mathcal{T}_h}\int_{\partial K}\bm{\phi}\cdot\bm{w}_h\,\mathrm{d}F\leq Ch^2|\bm{\phi}|_2|\bm{w}_h|_{1,h}.
\]
\end{lemma}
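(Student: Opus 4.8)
The plan is to exploit the fact that $\bm{w}_h\in\bm{W}_h$ has single-valued face integrals against the full DoF set, so the jump $[\bm{w}_h]$ across an interior face $F$ is ``small'' in the averaged sense. Concretely, on each face $F\in\mathcal{F}_h$ choose the $L^2(F)$-projection $\Pi_F^0$ onto constants (or the relevant low-degree polynomials matching the DoFs), and write
\[
\sum_{K\in\mathcal{T}_h}\int_{\partial K}\bm{\phi}\cdot\bm{w}_h\,\mathrm{d}F
=\sum_{F\in\mathcal{F}_h}\int_F(\bm{\phi}-\Pi_F^0\bm{\phi})\cdot[\bm{w}_h]\,\mathrm{d}F,
\]
where on boundary faces $[\bm{w}_h]=\bm{w}_h$ has vanishing integral, and on interior faces the term $\int_F\Pi_F^0\bm{\phi}\cdot[\bm{w}_h]\,\mathrm{d}F$ vanishes because the DoFs (face integrals, and face moments against the tangential/normal components as defined for $\bm{W}_K$) of $\bm{w}_h$ are continuous. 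The point is that $\bm{\phi}\in[H^2(\Omega)]^3$ is globally continuous, so no jump of $\bm{\phi}$ appears and we may insert the single-valued quantity $\Pi_F^0\bm{\phi}$ freely.

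Next I would estimate each face term by a scaling argument. On the reference configuration, $\|\widehat{\bm{\phi}}-\Pi_{\widehat F}^0\widehat{\bm{\phi}}\|_{0,\widehat F}\le C|\widehat{\bm{\phi}}|_{?,\widehat F}$; pulling back, and using the trace inequality together with the fact that the best-approximation error of $\bm{\phi}$ by the DoF-matching polynomials on $F$ is $O(h^2)$ in $L^2(F)$ when measured against $|\bm{\phi}|_{2}$ on the two adjacent cells (here one uses $H^2$ regularity and a trace theorem of the form $\|\bm{\phi}-\Pi_F^0\bm{\phi}\|_{0,F}\le Ch^{3/2}|\bm{\phi}|_{2,K_F}$, with $K_F$ one of the neighbouring cells). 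For the jump factor, a scaled trace inequality gives $\|[\bm{w}_h]\|_{0,F}\le Ch^{-1/2}(\|\bm{w}_h\|_{0,K_F}+h\,|\bm{w}_h|_{1,K_F})$ on each side; but since $[\bm{w}_h]$ has vanishing DoFs we can do better and replace $\bm{w}_h$ by $\bm{w}_h-\text{const}$ on each side, yielding $\|[\bm{w}_h]\|_{0,F}\le Ch^{1/2}|\bm{w}_h|_{1,h,\omega_F}$, where $\omega_F$ is the patch of the (at most two) cells sharing $F$. Multiplying the two bounds gives an $O(h^2)$ contribution $Ch^2|\bm{\phi}|_{2,\omega_F}|\bm{w}_h|_{1,h,\omega_F}$ per face.

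Finally, I would sum over $F\in\mathcal{F}_h$ and apply the Cauchy--Schwarz inequality in the form $\sum_F a_F b_F\le(\sum_F a_F^2)^{1/2}(\sum_F b_F^2)^{1/2}$, using that each cell belongs to a bounded number of face-patches $\omega_F$ (six faces per brick), so $\sum_F|\bm{\phi}|_{2,\omega_F}^2\le C|\bm{\phi}|_2^2$ and likewise $\sum_F|\bm{w}_h|_{1,h,\omega_F}^2\le C|\bm{w}_h|_{1,h}^2$. This yields the claimed bound $Ch^2|\bm{\phi}|_2|\bm{w}_h|_{1,h}$.

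The main obstacle I anticipate is getting the sharp $O(h^2)$ rather than the naive $O(h)$: one must use \emph{both} the orthogonality on the $\bm{\phi}$ side (subtracting $\Pi_F^0\bm{\phi}$, cashing in one power of $h$ via $H^2$ regularity of $\bm{\phi}$ through a trace theorem) \emph{and} the orthogonality on the $\bm{w}_h$ side (the jump $[\bm{w}_h]$ having zero face-moments against the DoF polynomials, cashing in a second power of $h$ via a Poincaré/Bramble--Hilbert argument on each cell). Pinning down exactly which polynomial space on $F$ the DoFs of $\bm{W}_K$ control—and checking that $\Pi_F^0\bm{\phi}$ can legitimately be chosen in that space so that $\int_F\Pi_F^0\bm{\phi}\cdot[\bm{w}_h]=0$—is the delicate bookkeeping step; since the face DoFs of $\bm{W}_K$ are (at least) the full vector of face integrals, taking $\Pi_F^0$ onto constants suffices, and the rest is routine scaling.
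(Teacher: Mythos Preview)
Your argument has a genuine gap: the face-by-face jump estimate you outline only yields $O(h)$, not $O(h^2)$. The bound you state, $\|\bm{\phi}-\Pi_F^0\bm{\phi}\|_{0,F}\le Ch^{3/2}|\bm{\phi}|_{2,K_F}$, is false when $\Pi_F^0$ is the projection onto constants: take $\bm{\phi}$ affine with nonzero tangential gradient along $F$, so $|\bm{\phi}|_{2,K_F}=0$ while $\bm{\phi}-\Pi_F^0\bm{\phi}\not\equiv 0$ on $F$. The correct estimate is $\|\bm{\phi}-\Pi_F^0\bm{\phi}\|_{0,F}\le Ch^{1/2}|\bm{\phi}|_{1,K_F}$. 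Combined with $\|[\bm{w}_h]\|_{0,F}\le Ch^{1/2}|\bm{w}_h|_{1,h,\omega_F}$ (which is right), the product is only $O(h)$. You cannot ``use the orthogonality twice'': the single relation $\int_F[\bm{w}_h]=0$ lets you subtract a constant from $\bm{\phi}$ \emph{or} invoke a Poincar\'e inequality on the jump, but these are the same piece of information, not two independent gains. Upgrading $\Pi_F^0$ to a projection onto $P_1(F)$ would fix the $\bm{\phi}$ side, but then $\int_F\Pi_F^1\bm{\phi}\cdot[\bm{w}_h]$ no longer vanishes, since the face DoFs of $\bm{W}_K$ are only the constant face integrals (one per component per face).

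The mechanism the paper exploits is different and is precisely what your outline misses: each component of $\bm{W}_K$ is a sum of three \emph{univariate} functions, e.g.\ the first component lies in $\mathrm{span}\{1,x_1,x_2,x_3,x_2^2,x_3^2\}$. This structural property, together with the vanishing of the constant face integrals of the jump, is exactly the hypothesis of Lemma~3.1 in Lin--Tobiska--Zhou, which the paper invokes. That argument works cell by cell via integral identities: the univariate structure forces cancellation between contributions from \emph{opposite} faces of the same brick, producing the extra factor of $h$ that a pure face-jump Cauchy--Schwarz estimate cannot see. Without using property~(ii) (or an equivalent cellwise cancellation), the $O(h^2)$ rate is unreachable for this element.
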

\begin{proof}
It is easy to check that functions in $\bm{W}_h$ have the following two properties:
(i) any face integral of the jump of each component vanishes;
(ii) each component restricted to $\bm{W}_K$ can be written as the sum of three univariate functions with respect to $x_1$, $x_2$ or $x_3$. Therefore by the spirit of Lemma 3.1 in \cite{Lin2005}, the conclusion is established.
\end{proof}

\begin{lemma}
\label{lemma: consistency Vh}
For all $\bm{v}_h\in\bm{V}_K$ and $K\in\mathcal{T}_h$, the following equation holds:
\[
(\bm{q},\curl(\bm{v}_h-\bm{I}_K^C\bm{v}_h))_K=0,~\forall \bm{q}\in [P_0(K)]^3.
\]
\end{lemma}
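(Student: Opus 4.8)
The plan is to reduce everything to the reference cell $\widehat{K}$ and then exploit the very explicit algebraic structure of $\bm{V}_K$, $\bm{V}_K^C$, and the curl operator. First I would note that both $\curl\,\bm{v}_h$ and $\curl\,\bm{I}_K^C\bm{v}_h$ are polynomials, so the quantity $(\bm{q},\curl(\bm{v}_h-\bm{I}_K^C\bm{v}_h))_K$ is a fixed bilinear functional of $\bm{v}_h\in\bm{V}_K$ and $\bm{q}\in[P_0(K)]^3$; since it transforms covariantly under the affine scaling $\bm{x}\mapsto(\bm{x}-\bm{x}_K)/h$ (the edge DoFs defining $\bm{I}_K^C$ are preserved under this scaling, so $\widehat{\bm{I}_K^C\bm{v}_h}=\bm{I}_{\widehat{K}}^C\widehat{\bm{v}_h}$), it suffices to prove the identity on $\widehat{K}$. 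So the whole statement is a finite-dimensional linear-algebra fact on the reference brick, and in principle could be verified by direct computation on a basis of $\bm{V}_{\widehat{K}}$.

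The cleaner route, which I would actually write up, is to use the integration-by-parts identity $\int_K\bm{q}\cdot\curl\bm{w}\,\mathrm{d}\bm{x}=\int_{\partial K}(\bm{w}\times\bm{n})\cdot\bm{q}\,\mathrm{d}F$ valid for constant $\bm{q}$ (the volume term $\int_K\curl\bm{q}\cdot\bm{w}=0$). Applying this to $\bm{w}=\bm{v}_h-\bm{I}_K^C\bm{v}_h$ reduces the claim to showing that, for every face $F\in\mathcal{F}_K$ and every constant vector $\bm{q}$,
\[
\int_F\big((\bm{v}_h-\bm{I}_K^C\bm{v}_h)\times\bm{n}_F\big)\cdot\bm{q}\,\mathrm{d}F=0.
\]
On a face $F$ the two-dimensional vector $(\bm{v}_h\times\bm{n}_F)$ has exactly the two tangential components of $\bm{v}_h$ along the edge directions $\bm{t}_{1,F},\bm{t}_{2,F}$, so it is enough to show that $\int_F\bm{v}_h\cdot\bm{t}_{j,F}\,\mathrm{d}F=\int_F\bm{I}_K^C\bm{v}_h\cdot\bm{t}_{j,F}\,\mathrm{d}F$ for $j=1,2$. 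Here I would use that $\bm{I}_K^C\bm{v}_h$ is the lowest-order Nédélec interpolant: its $\bm{t}_{j,F}$-component is the (unique) element of $Q_{\cdot}(F)$ that is \emph{linear in the direction along $\bm{t}_{j,F}$ and constant in the other face variable}, matching $\bm{v}_h\cdot\bm{t}_{j,F}$ at the two relevant edge-integral DoFs. Since the two edges of $F$ parallel to $\bm{t}_{j,F}$ sit at the two extreme values of the transverse face coordinate, matching the edge integrals of an affine-in-the-transverse-variable function forces the face integrals to agree — this is just the midpoint/trapezoidal exactness for linear functions. What makes the argument go through is precisely that $\bm{v}_h\cdot\bm{t}_{j,F}$ restricted to $F$ is itself affine in the transverse face variable: this follows from the definition $\bm{V}_K=\nabla Q_1(K)\oplus(\bm{x}-\bm{x}_K)\times\bm{W}_K$, after one checks, component by component, that the tangential trace on each coordinate-plane face of both $\nabla Q_1(K)$ and $(\bm{x}-\bm{x}_K)\times\bm{W}_K$ lies in the appropriate $Q_{0,1}$-type space.

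The main obstacle is that last structural check: one must verify that for each face $F$ (say $\{x_3=\mathrm{const}\}$) and each edge direction $\bm{t}_{j,F}$ (say $x_1$), the function $\bm{v}_h\cdot\bm{t}_{j,F}|_F$ depends affinely (in fact, at most linearly) on the transverse face coordinate $x_2$, for \emph{all} $\bm{v}_h\in\bm{V}_K$. For the gradient part $\nabla q$, $q\in Q_1(K)$, this is immediate since $\partial_1 q$ restricted to $x_3=\mathrm{const}$ is affine in $x_2$. For the part $(\bm{x}-\bm{x}_K)\times\bm{w}$ with $\bm{w}\in\bm{W}_K$, the relevant component is a sum of products of coordinate functions with entries of $\bm{w}$; using that each entry of $\bm{W}_K$ lies in $[P_1(K)]$ plus a span of pure squares $x_i^2$ in the \emph{other} two variables, one traces through the cross-product formula and confirms that, after restricting $x_3$, the dependence on $x_2$ is at most linear. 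This is a short but slightly fiddly case analysis; once it is in place, the Stokes-formula reduction and the Nédélec trapezoidal-exactness argument finish the proof with no estimates needed, since the identity is exact.
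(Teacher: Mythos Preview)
Your reduction via integration by parts to the boundary identity
\[
\int_{\partial K}\bm{q}\cdot(\bm{v}_h-\bm{I}_K^C\bm{v}_h)\times\bm{n}\,\mathrm{d}F=0
\]
is exactly what the paper does (it then cites Lemma~5.2 of \cite{Wang2023} for this identity). The problem is your proposed proof of the boundary identity: you claim that, face by face,
\[
\int_F(\bm{v}_h-\bm{I}_K^C\bm{v}_h)\cdot\bm{t}_{j,F}\,\mathrm{d}F=0,
\]
and you justify this by asserting that $\bm{v}_h\cdot\bm{t}_{j,F}|_F$ is affine in the transverse face coordinate. This structural claim is false. Take $K$ centered at the origin and $\bm{w}=(0,0,x_2)\in\bm{W}_K$; then
\[
\bm{v}_h=(\bm{x}-\bm{x}_K)\times\bm{w}=(x_2^2,\,-x_1x_2,\,0)\in\bm{V}_K,
\]
and on the face $F=\{x_3=c\}$ with $\bm{t}_{j,F}=e_1$ we have $\bm{v}_h\cdot e_1|_F=x_2^2$, which is quadratic in the transverse variable $x_2$. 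A direct computation gives $\int_F(\bm{v}_h)_1\,\mathrm{d}F=h_1h_2^3/12$ while $\bm{I}_K^C\bm{v}_h=(h_2^2/4,0,0)$ so $\int_F(\bm{I}_K^C\bm{v}_h)_1\,\mathrm{d}F=h_1h_2^3/4$; your trapezoidal-exactness step therefore fails, and the per-face identity is simply not true. More generally, the component $(\bm{x}\times\bm{w})_1=x_2w_3-x_3w_2$ picks up $x_2^2$ and even $x_2^3$ terms from $w_3\in P_1(K)+\mathrm{span}\{x_1^2,x_2^2\}$, so your ``slightly fiddly case analysis'' cannot succeed.

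What actually makes the boundary integral vanish is cancellation between \emph{opposite} faces of $K$ (in the example above the two faces $x_3=\pm h_3/2$ contribute $\pm h_1h_2^3/6$ to the $e_2$-component and cancel). Equivalently, one can argue directly on the cell: since $\curl\,\bm{V}_K\subset\bm{W}_K$ and each component of $\bm{W}_K$ has cell average equal to its face average on the corresponding pair of faces, one shows $\int_K\curl\,\bm{v}_h$ is recovered from the edge DoFs via Stokes on each face, hence equals $\int_K\curl\,\bm{I}_K^C\bm{v}_h$. Either way, the argument must treat $\partial K$ globally rather than one face at a time; your proposal needs to be reworked along these lines.
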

\begin{proof} Lemma 5.2 in \cite{Wang2023} asserts
\[
\int_{\partial K}\bm{q}\cdot(\bm{v}_h-\bm{I}_K^C\bm{v}_h)\times\bm{n}\,\mathrm{d}F=0,~\forall \bm{q}\in [P_0(K)]^3,
\]
then the conclusion follows from an integration by parts.
\end{proof}

Now we are in a position to provide the superclose result.

\begin{theorem}
\label{th: supercloseness}
Assume that the exact solution $\bm{u}$ of problem (\ref{e: variational formulation}) has the regularity
$\bm{u}\in [H^2(\Omega)]^3$ and $\curl\,\bm{u}\in[H^3(\Omega)]^3$.
Then the discrete solution $\bm{u}_h$ of problem (\ref{e: modified discrete formulation}) satisfies the following superclose property:
\[
\|\bm{I}_h\bm{u}-\bm{u}_h\|_{\grad\,\curl,h}\leq Ch^2(|\bm{u}|_2+\|\curl\,\bm{u}\|_3).
\]
\end{theorem}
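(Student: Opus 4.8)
The plan is to bound $\bm{e}_h:=\bm{I}_h\bm{u}-\bm{u}_h\in\bm{V}_h$ (well defined in $\bm{V}_h$ under the stated regularity) by an energy argument. Since $\mathrm{div}\,\bm{f}=0$ forces $p=p_h=0$, (\ref{e: modified discrete formulation}) gives $a_h(\bm{u}_h,\bm{v}_h)=(\bm{f},\bm{I}^C_h\bm{v}_h)$, hence the error equation $a_h(\bm{e}_h,\bm{v}_h)=a_h(\bm{I}_h\bm{u},\bm{v}_h)-(\bm{f},\bm{I}^C_h\bm{v}_h)$ for all $\bm{v}_h\in\bm{V}_h$. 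First I would split off the gradient part: let $\chi_h\in Q_h$ solve $(\nabla\chi_h,\nabla q_h)=b(\bm{e}_h,q_h)$ for all $q_h\in Q_h$, and set $\bm{e}_h^\perp:=\bm{e}_h-\nabla\chi_h\in\bm{V}_h$ (using $\nabla Q_h\subset\bm{V}_h$ from (\ref{e: space relation})), so that $b(\bm{e}_h^\perp,q_h)=0$ for all $q_h$. Because $b(\bm{u}_h,q_h)=0$ and $b(\bm{u},q_h)=0$, one has $b(\bm{e}_h,q_h)=(\bm{I}_h\bm{u}-\bm{u},\nabla q_h)$, which by Lemma \ref{lemma: bh superclose} is $O(h^2)(|\bm{u}|_2+\|\curl\,\bm{u}\|_3)|q_h|_1$; hence $\|\nabla\chi_h\|_{\grad\,\curl,h}=\|\nabla\chi_h\|_0\leq Ch^2(|\bm{u}|_2+\|\curl\,\bm{u}\|_3)$. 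By the discrete coercivity of $a_h$ on the kernel of $b$ underlying the well-posedness of (\ref{e: discrete formulation}), $\|\bm{e}_h^\perp\|_{\grad\,\curl,h}^2\leq Ca_h(\bm{e}_h^\perp,\bm{e}_h^\perp)=Ca_h(\bm{e}_h,\bm{e}_h^\perp)$, since $\nabla\chi_h$ lies in the kernel of $a_h$. By the triangle inequality it then remains to bound $a_h(\bm{e}_h,\bm{v}_h)$ at $\bm{v}_h=\bm{e}_h^\perp$, and combine with the bound on $\nabla\chi_h$.

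Next I would rewrite the right-hand side of the error equation. Set $\bm{\psi}:=\curl\,\bm{u}\in[H^3(\Omega)]^3$, so (\ref{e: curl L curl}) reads $\bm{f}=-\curl\,\Delta\bm{\psi}$. Since $\bm{v}_h\in\bm{V}_h$ has vanishing boundary DoFs, $\bm{I}^C_h\bm{v}_h$ has vanishing tangential trace on $\partial\Omega$, and integration by parts gives $(\bm{f},\bm{I}^C_h\bm{v}_h)=-(\Delta\bm{\psi},\curl\,\bm{I}^C_h\bm{v}_h)$ with no boundary term. On the other side, Lemma \ref{lemma: commute} gives $\curl_h\bm{I}_h\bm{u}=\bm{\Pi}_h\bm{\psi}$, and cellwise integration by parts (the faces being axis-aligned) turns $a_h(\bm{I}_h\bm{u},\bm{v}_h)$ into $(\nabla_h(\bm{\Pi}_h\bm{\psi}-\bm{\psi}),\nabla_h\curl_h\bm{v}_h)-(\Delta\bm{\psi},\curl_h\bm{v}_h)+\sum_K\int_{\partial K}(\partial_{\bm{n}_K}\bm{\psi})\cdot\curl\,\bm{v}_h\,\mathrm{d}F$. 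Collecting terms, and using $\curl_h\bm{v}_h-\curl\,\bm{I}^C_h\bm{v}_h=\curl(\bm{v}_h-\bm{I}^C_K\bm{v}_h)$ on each $K$, the error equation becomes
\[
a_h(\bm{e}_h,\bm{v}_h)=\underbrace{(\nabla_h(\bm{\Pi}_h\bm{\psi}-\bm{\psi}),\nabla_h\curl_h\bm{v}_h)}_{T_1}+\underbrace{\sum_K\int_{\partial K}(\partial_{\bm{n}_K}\bm{\psi})\cdot\curl\,\bm{v}_h\,\mathrm{d}F}_{T_2}-\underbrace{\sum_K(\Delta\bm{\psi},\curl(\bm{v}_h-\bm{I}^C_K\bm{v}_h))_K}_{T_3}.
\]
Since $\curl_h\bm{v}_h\in\bm{W}_h$ by (\ref{e: space relation}), Lemma \ref{lemma: WK interp superclose} (with $\bm{w}=\bm{\psi}$) gives $|T_1|\leq Ch^2\|\curl\,\bm{u}\|_3|\curl_h\bm{v}_h|_{1,h}$; and $T_2$, the nonconforming consistency error of $\bm{W}_h$, is controlled by Lemma \ref{lemma: consistency Wh} applied componentwise to the first derivatives of $\bm{\psi}$ (which lie in $[H^2(\Omega)]^3$), giving $|T_2|\leq Ch^2\|\curl\,\bm{u}\|_3|\curl_h\bm{v}_h|_{1,h}$.

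The delicate term is $T_3$. Lemma \ref{lemma: consistency Vh} allows $\Delta\bm{\psi}$ to be replaced cellwise by $\Delta\bm{\psi}-\overline{\Delta\bm{\psi}}^K$, which costs one power of $h$ via $\|\Delta\bm{\psi}-\overline{\Delta\bm{\psi}}^K\|_{0,K}\leq Ch|\bm{\psi}|_{3,K}$. To recover the \emph{second} power of $h$ I need $\|\curl(\bm{v}_h-\bm{I}^C_K\bm{v}_h)\|_{0,K}\leq Ch|\curl_h\bm{v}_h|_{1,K}$ for $\bm{v}_h\in\bm{V}_K$, which I would obtain by a scaling argument on the reference cell: there the linear map $\bm{v}_h\mapsto\curl(\bm{v}_h-\bm{I}^C_{\widehat{K}}\bm{v}_h)$ annihilates every $\bm{v}_h$ with constant curl (a consequence of the commuting-diagram property of the lowest-order Nédélec interpolation, of which Lemma \ref{lemma: consistency Vh} is the first instance), so the Bramble–Hilbert lemma applies exactly as in the proofs of Lemmas \ref{lemma: WK interp superclose} and \ref{lemma: bh superclose}. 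I expect this to be the main obstacle: $\bm{v}_h$ is not smooth, and the naive estimate $\|\curl(\bm{v}_h-\bm{I}^C_K\bm{v}_h)\|_{0,K}\leq C\|\curl\,\bm{v}_h\|_{0,K}$ would leave only an $O(h)$, not $O(h^2)$, superclose order. Granting it, $|T_3|\leq Ch^2\|\curl\,\bm{u}\|_3|\curl_h\bm{v}_h|_{1,h}$. Finally, taking $\bm{v}_h=\bm{e}_h^\perp$ and noting $\curl_h\bm{e}_h^\perp=\curl_h\bm{e}_h$, the three terms are bounded by $Ch^2(|\bm{u}|_2+\|\curl\,\bm{u}\|_3)\|\bm{e}_h^\perp\|_{\grad\,\curl,h}$; dividing yields $\|\bm{e}_h^\perp\|_{\grad\,\curl,h}\leq Ch^2(|\bm{u}|_2+\|\curl\,\bm{u}\|_3)$, and combining with the bound on $\nabla\chi_h$ completes the proof.
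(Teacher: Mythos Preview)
Your proof is correct and uses exactly the same five lemmas as the paper to control the same three pieces of the error (interpolation closeness via Lemmas~\ref{lemma: WK interp superclose} and~\ref{lemma: commute}, the $\bm{W}_h$-consistency term via Lemma~\ref{lemma: consistency Wh}, the N\'ed\'elec reconstruction term via Lemma~\ref{lemma: consistency Vh}, and the $b$-term via Lemma~\ref{lemma: bh superclose}). The only real difference is organizational. The paper invokes the abstract discrete stability of the mixed system (the Brezzi inf-sup bound, cf.~Lemma~4.4 in \cite{Huang2023}) and bounds the four contributions appearing in the numerator of the sup; you instead perform an explicit discrete Helmholtz splitting $\bm{e}_h=\nabla\chi_h+\bm{e}_h^\perp$, control $\nabla\chi_h$ directly from Lemma~\ref{lemma: bh superclose}, and then use coercivity of $a_h$ on the discrete kernel. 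The two frameworks are equivalent here: your coercivity-on-kernel step is precisely one half of the Brezzi stability the paper cites, and your $\chi_h$-bound plays the role of the paper's $b(\bm{I}_h\bm{u}-\bm{u},q_h)$ term.

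One remark on the point you flag as ``the main obstacle'': the estimate $\|\curl(\bm{v}_h-\bm{I}^C_K\bm{v}_h)\|_{0,K}\leq Ch|\curl\,\bm{v}_h|_{1,K}$ is not delicate at all. On each cell $\bm{v}_h|_K$ is a polynomial, hence smooth, and the paper simply invokes the standard N\'ed\'elec interpolation error estimate (Theorem~6.6 in \cite{Monk2003}). Your scaling/Bramble--Hilbert argument via the commuting diagram is a valid alternative derivation of the same fact, but the smoothness concern you raise does not arise because the estimate is applied cell by cell.
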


\begin{proof}
By the standard theory of mixed elements \cite{Boffi2013}, the wellposedness of (\ref{e: modified discrete formulation}) (or equivalently (\ref{e: discrete formulation})) implies the following
discrete stability:
\[
\|\widetilde{\bm{v}}_h\|_{\grad\,\curl,h}+|\widetilde{q}_h|_1
\leq C\sup_{(\bm{v}_h,q_h)\in\bm{V}_h\times Q_h}
\frac{a_h(\widetilde{\bm{v}}_h,\bm{v}_h)+b(\bm{v}_h,\widetilde{q}_h)+b(\widetilde{\bm{v}}_h,q_h)}
{\|\bm{v}_h\|_{\grad\,\curl,h}+|q_h|_1},
~\forall (\bm{\widetilde{\bm{v}}_h},\widetilde{q}_h)\in\bm{V}_h\times Q_h.
\]
See also Lemma 4.4 in \cite{Huang2023}.
Taking $\widetilde{\bm{v}}_h=\bm{I}_h\bm{u}-\bm{u}_h$, $\widetilde{q}_h=0$, and combining (\ref{e: variational formulation}) with (\ref{e: modified discrete formulation}), one then has
\begin{equation}
\label{e: superclose all}
\begin{aligned}
&\|\bm{I}_h\bm{u}-\bm{u}_h\|_{\grad\,\curl,h}\\
\leq& C\sup_{(\bm{v}_h,q_h)\in\bm{V}_h\times Q_h}
\frac{a_h(\bm{I}_h\bm{u}-\bm{u}_h,\bm{v}_h)+b(\bm{I}_h\bm{u}-\bm{u}_h,q_h)}
{\|\bm{v}_h\|_{\grad\,\curl,h}+|q_h|_1}\\
\leq& C\sup_{(\bm{v}_h,q_h)\in\bm{V}_h\times Q_h}
\frac{a_h(\bm{I}_h\bm{u},\bm{v}_h)-(\bm{f},\bm{I}_h^C\bm{v}_h)+b(\bm{I}_h\bm{u}-\bm{u},q_h)+b(\bm{u}-\bm{u}_h,q_h)}
{\|\bm{v}_h\|_{\grad\,\curl,h}+|q_h|_1}\\
\leq& C\sup_{(\bm{v}_h,q_h)\in\bm{V}_h\times Q_h}
\frac{a_h(\bm{I}_h\bm{u}-\bm{u},\bm{v}_h)+a(\bm{u},\bm{v}_h)-(\bm{f},\bm{I}_h^C\bm{v}_h)+b(\bm{I}_h\bm{u}-\bm{u},q_h)}
{\|\bm{v}_h\|_{\grad\,\curl,h}+|q_h|_1}.
\end{aligned}
\end{equation}
For the first term, it follows from (\ref{e: space relation}) that $\curl_h\bm{v}_h\in\bm{W}_h$. Then by Lemmas \ref{lemma: WK interp superclose} and \ref{lemma: commute},
\begin{equation}
\label{e: superclose first}
\begin{aligned}
|a_h(\bm{I}_h\bm{u}-\bm{u},\bm{v}_h)|
&=|(\nabla_h(\curl\,\bm{u}-\curl_h\bm{I}_h\bm{u}),\nabla_h\curl\,\bm{v}_h)|\\
&=|(\nabla_h(\curl\,\bm{u}-\bm{\Pi}_h\curl\,\bm{u}),\nabla_h\curl\,\bm{v}_h)|\\
&\leq Ch^2|\curl\,\bm{u}|_3|\curl\,\bm{v}_h|_{1,h}.
\end{aligned}
\end{equation}
For the last term, we adopt Lemma \ref{lemma: bh superclose} to derive
\begin{equation}
\label{e: superclose last}
|b(\bm{I}_h\bm{u}-\bm{u},q_h)|\leq Ch^2(|\bm{u}|_2+h|\curl\,\bm{u}|_2+h^2|\curl\,\bm{u}|_3)|q_h|_1
\leq Ch^2(|\bm{u}|_2+\|\curl\,\bm{u}\|_3)|q_h|_1.
\end{equation}
It suffices to check the consistency error term. To this end, integrating by parts using (\ref{e: the quad-curl problem}) and (\ref{e: curl L curl}) one sees
\begin{equation}
\label{e: superclose consistency}
\begin{aligned}
a(\bm{u},\bm{v}_h)-(\bm{f},\bm{I}_h^C\bm{v}_h)
&=a(\bm{u},\bm{v}_h)+(\Delta\curl\,\bm{u},\curl_h\bm{v}_h)-(\Delta\curl\,\bm{u},\curl_h\bm{v}_h)-(\bm{f},\bm{I}_h^C\bm{v}_h)\\
&=\sum_{K\in\mathcal{T}_h}\int_{\partial K}\frac{\partial(\curl\,\bm{u})}{\partial\bm{n}}\cdot\curl_h\bm{v}_h\,\mathrm{d}F
-(\Delta\curl\,\bm{u},\curl_h(\bm{v}_h-\bm{I}_h^C\bm{v}_h)).
\end{aligned}
\end{equation}
By taking $\bm{\phi}=\partial(\curl\,\bm{u})/\partial\bm{n}$ in Lemma \ref{lemma: consistency Wh} we obtain
\begin{equation}
\label{e: consistency 1}
\left|\sum_{K\in\mathcal{T}_h}\int_{\partial K}\frac{\partial(\curl\,\bm{u})}{\partial\bm{n}}\cdot\curl_h\bm{v}_h\,\mathrm{d}F\right|\leq Ch^2|\curl\,\bm{u}|_3|\curl_h\bm{v}_h|_{1,h},
\end{equation}
Moreover, let $\mathcal{P}_{0,K}$ be the $L^2$-projection operator to $[P_0(K)]^3$. Lemma \ref{lemma: consistency Vh} and the standard interpolation error estimate for the Nédélec element (Theorem 6.6 in \cite{Monk2003}) imply that
\begin{equation}
\label{e: consistency 2}
\begin{aligned}
|(\Delta\curl\,\bm{u},\curl_h(\bm{v}_h-\bm{I}_h^C\bm{v}_h))|
&=\left|\sum_{K\in\mathcal{T}_h}(\Delta\curl\,\bm{u}-\mathcal{P}_{0,K}\Delta\curl\,\bm{u},\curl_h(\bm{v}_h-\bm{I}_h^C\bm{v}_h))_K\right|\\
&\leq Ch^2|\curl\,\bm{u}|_3|\curl_h\bm{v}_h|_{1,h}.
\end{aligned}
\end{equation}
Substituting (\ref{e: superclose first})--(\ref{e: consistency 2}) into (\ref{e: superclose all}) results in the desired estimate.
\end{proof}

\section{Postprocessing}
\label{s4}
We follow the idea in \cite{Lin2000} to design a postprocessing method to achieve the global superconvergence.
Let $M$ be a brick macroelement consisting of $3\times3\times3$ adjacent cells in $\mathcal{T}_h$
and introduce the sets $\mathcal{F}_M=\bigcup_{K\subset M}\mathcal{F}_K$ and $\mathcal{E}_M=\bigcup_{K\subset M}\mathcal{E}_K$.
We construct two elements over $M$: for the first element, we select the shape function space $\bm{V}_M$ as the second order Nédélec brick element space, namely,
\[
\bm{V}_M=Q_{2,3,3}(M)\times Q_{3,2,3}(M)\times Q_{3,3,2}(M),
\]
and the DoFs are
\[
\int_E\bm{v}\cdot\bm{t}\,\mathrm{d}s,~\forall E\in\mathcal{E}_M,~\forall \bm{v}\in\bm{V}_M.
\]
The second element has the shape function space $\bm{W}_M$:
\[
\bm{W}_M=Q_{3,2,2}(M)\times Q_{2,3,2}(M)\times Q_{2,2,3}(M),
\]
which is precisely the same as the second order Raviart--Thomas--Nédélec brick element space \cite{Nedelec1980}, and the associated DoFs are
\[
\int_F\bm{w}\cdot\bm{n}_F\,\mathrm{d}F,~\forall F\in\mathcal{F}_M,~\forall \bm{w}\in\bm{W}_M.
\]
It is easy to see these two elements are well-defined.
Moreover, let $\bm{I}_M$ and $\bm{\Pi}_M$ be their canonical interpolation operators, then
as in Lemma \ref{lemma: commute}, the following commutative diagram holds:
\begin{equation}
\label{e: post commute}
\bm{\Pi}_M\curl_h\bm{v}=\curl_h\bm{I}_M\bm{v},~\forall\bm{v}\in H^1(\curl; M)+\bm{V}_h|_M.
\end{equation}
As a consequence, for a sufficiently smooth function $\bm{v}$,
we have the interpolation error estimate
\begin{equation}
\label{e: macro interp err}
\begin{aligned}
&\|\bm{v}-\bm{I}_M\bm{v}\|_{0,M}\leq Ch^2|\bm{v}|_{2,M},\\
&|\curl(\bm{v}-\bm{I}_M\bm{v})|_{j,M}=|\curl\,\bm{v}-\bm{\Pi}_M\curl\,\bm{v}|_{j,M}
\leq Ch^{3-j}|\curl\,\bm{v}|_{3,M},~j=0,1.
\end{aligned}
\end{equation}
Let $\bm{I}_{3h}$ be the global version of $\bm{I}_M$, then it follows from the definitions of $\bm{I}_M$, $\bm{I}_h$ and (\ref{e: post commute}), (\ref{e: macro interp err}) that for all sufficiently smooth $\bm{v}$ over $\Omega$ and $\bm{v}_h\in\bm{V}_h$,
\begin{equation}
\label{e: I3h}
\begin{aligned}
\bm{I}_{3h}\bm{I}_h\bm{v}&=\bm{I}_{3h}\bm{v},~\|\bm{v}-\bm{I}_{3h}\bm{v}\|_{\grad\,\curl,h}\leq Ch^2(|\bm{v}|_2+|\curl\,\bm{v}|_3),\\
\|\bm{I}_{3h}\bm{v}_h\|_{\grad\,\curl,h}^2
&=\sum_M\left(\|\bm{I}_M\bm{v}_h\|_{0,M}^2+\|\bm{\Pi}_M\curl_h\bm{v}_h\|_{0,M}^2+|\bm{\Pi}_M\curl_h\bm{v}_h|_{1,M}^2\right)\\
&\leq C\sum_M\sum_{K\subset M} \left(\|\bm{v}_h\|_{0,K}^2+\|\curl_h\bm{v}_h\|_{0,K}^2+|\curl_h\bm{v}_h|_{1,K}^2\right)
=C\|\bm{v}_h\|_{\grad\,\curl,h}^2.
\end{aligned}
\end{equation}
Then we can state our superconvergence result.

\begin{theorem}
\label{th: superconvergence}
Assume that the exact solution $\bm{u}$ of problem (\ref{e: variational formulation}) has the regularity
$\bm{u}\in [H^2(\Omega)]^3$ and $\curl\,\bm{u}\in[H^3(\Omega)]^3$.
Then for the discrete solution $\bm{u}_h$ of problem (\ref{e: modified discrete formulation}),
the postprocessed solution $\bm{I}_{3h}\bm{u}_h$ achieves the following superconvergence:
\[
\|\bm{u}-\bm{I}_{3h}\bm{u}_h\|_{\grad\,\curl,h}\leq Ch^2(|\bm{u}|_2+\|\curl\,\bm{u}\|_3).
\]
\end{theorem}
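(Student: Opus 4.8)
The plan is to derive the global superconvergence estimate by splitting the error into a postprocessed interpolation error and a term controlled by the supercloseness result of Theorem \ref{th: supercloseness}. Concretely, I would write
\[
\bm{u}-\bm{I}_{3h}\bm{u}_h=(\bm{u}-\bm{I}_{3h}\bm{u})+(\bm{I}_{3h}\bm{u}-\bm{I}_{3h}\bm{u}_h),
\]
and then observe that by the first identity in (\ref{e: I3h}), $\bm{I}_{3h}\bm{u}=\bm{I}_{3h}\bm{I}_h\bm{u}$, so the second summand equals $\bm{I}_{3h}(\bm{I}_h\bm{u}-\bm{u}_h)$. This is the crucial bookkeeping step: it lets us feed the superclose quantity $\bm{I}_h\bm{u}-\bm{u}_h$ directly into the stability bound for $\bm{I}_{3h}$.

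Next I would estimate the two pieces separately using the triangle inequality in the $\|\cdot\|_{\grad\,\curl,h}$ norm. For the first piece, the interpolation error estimate $\|\bm{u}-\bm{I}_{3h}\bm{u}\|_{\grad\,\curl,h}\leq Ch^2(|\bm{u}|_2+|\curl\,\bm{u}|_3)$, which is the second statement in (\ref{e: I3h}), gives the desired $O(h^2)$ bound immediately (one should note $\bm{u}$ has the required regularity by hypothesis). For the second piece, the $\|\cdot\|_{\grad\,\curl,h}$-stability of $\bm{I}_{3h}$ on $\bm{V}_h$ — the third statement in (\ref{e: I3h}), namely $\|\bm{I}_{3h}\bm{v}_h\|_{\grad\,\curl,h}\leq C\|\bm{v}_h\|_{\grad\,\curl,h}$ — applied with $\bm{v}_h=\bm{I}_h\bm{u}-\bm{u}_h\in\bm{V}_h$ yields
\[
\|\bm{I}_{3h}(\bm{I}_h\bm{u}-\bm{u}_h)\|_{\grad\,\curl,h}\leq C\|\bm{I}_h\bm{u}-\bm{u}_h\|_{\grad\,\curl,h}\leq Ch^2(|\bm{u}|_2+\|\curl\,\bm{u}\|_3),
\]
where the last inequality is exactly Theorem \ref{th: supercloseness}. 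Combining the two bounds and absorbing $|\curl\,\bm{u}|_3\leq\|\curl\,\bm{u}\|_3$ finishes the argument.

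I do not expect a genuine obstacle here, since all the analytic work has been front-loaded into (\ref{e: I3h}) and Theorem \ref{th: supercloseness}; the proof is essentially a three-line assembly. The only point requiring a little care is making sure $\bm{I}_h\bm{u}-\bm{u}_h$ actually lies in $\bm{V}_h$ so that the stability estimate in (\ref{e: I3h}) applies — this holds because $\bm{u}_h\in\bm{V}_h$ by construction and $\bm{I}_h\bm{u}\in\bm{V}_h$ by the boundary-trace discussion preceding the five lemmas (using $\bm{u}\in H_0(\grad\,\curl;\Omega)$, which follows from $\bm{u}$ solving (\ref{e: variational formulation})). One might also remark in passing that $\bm{I}_{3h}$ is well-defined on the macroelement partition, which requires $\Omega$ to admit a partition into $3\times3\times3$ macroelements; as with the uniform-mesh assumption, this is a harmless technical hypothesis. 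If one wanted to be fully explicit, the single displayed chain of inequalities
\[
\|\bm{u}-\bm{I}_{3h}\bm{u}_h\|_{\grad\,\curl,h}\leq\|\bm{u}-\bm{I}_{3h}\bm{u}\|_{\grad\,\curl,h}+\|\bm{I}_{3h}(\bm{I}_h\bm{u}-\bm{u}_h)\|_{\grad\,\curl,h}
\]
together with the two bounds above constitutes the entire proof.
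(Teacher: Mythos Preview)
Your proposal is correct and follows essentially the same argument as the paper: the paper also splits $\bm{u}-\bm{I}_{3h}\bm{u}_h$ via the triangle inequality into $\|\bm{u}-\bm{I}_{3h}\bm{I}_h\bm{u}\|_{\grad\,\curl,h}+\|\bm{I}_{3h}(\bm{I}_h\bm{u}-\bm{u}_h)\|_{\grad\,\curl,h}$, invokes the identity $\bm{I}_{3h}\bm{I}_h\bm{u}=\bm{I}_{3h}\bm{u}$ and the interpolation estimate from (\ref{e: I3h}) for the first term, and the stability bound from (\ref{e: I3h}) together with Theorem~\ref{th: supercloseness} for the second. Your additional remarks on $\bm{I}_h\bm{u}-\bm{u}_h\in\bm{V}_h$ and the macroelement hypothesis are accurate and merely make explicit what the paper leaves implicit.
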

\begin{proof}
According to Theorem \ref{th: supercloseness} and (\ref{e: I3h}),
\[
\begin{aligned}
\|\bm{u}-\bm{I}_{3h}\bm{u}_h\|_{\grad\,\curl,h}
&\leq\|\bm{u}-\bm{I}_{3h}\bm{I}_h\bm{u}\|_{\grad\,\curl,h}
+\|\bm{I}_{3h}(\bm{I}_h\bm{u}-\bm{u}_h)\|_{\grad\,\curl,h}\\
&\leq\|\bm{u}-\bm{I}_{3h}\bm{u}\|_{\grad\,\curl,h}+C\|\bm{I}_h\bm{u}-\bm{u}_h\|_{\grad\,\curl,h}\\
&\leq Ch^2(|\bm{u}|_2+\|\curl\,\bm{u}\|_3),
\end{aligned}
\]
which completes the proof.
\end{proof}

\section{Numerical results}
\label{s5}
In this section, we report and analyze the results of a numerical example.
Let the solution domain be $\Omega=[0,1]^3$.
We employ the same quad-curl problem as in \cite{Huang2023}, where the exact solution of $(\ref{e: the quad-curl problem})$ is given by
\[
\bm{u}=\curl\left(0,0,\sin^3(\pi x)\sin^3(\pi y)\sin^3(\pi z)\right)^T,
\]
and the righthand-side term $\bm{f}$ can be accordingly calculated.
The meshes are generated by $n\times n\times n$ uniform partitions.
We first investigate the performance of the original scheme (\ref{e: discrete formulation})
and the modified counterpart (\ref{e: modified discrete formulation}).
These results are given in Tables \ref{t: original} and \ref{t: modified}.
The supercloseness and superconvergence of the scheme (\ref{e: modified discrete formulation}) are checked with the results listed in Tables \ref{t: supercloseness} and \ref{t: superconvergence}.

\begin{table}[!htb]
\begin{center}
\begin{tabular}{p{0.5cm}<{\centering}p{2.8cm}<{\centering}p{0.8cm}<{\centering}p{2.8cm}<{\centering}p{0.8cm}
<{\centering}p{2.8cm}<{\centering}p{0.8cm}<{\centering}}
\toprule
$n$ & $|\curl_h(\bm{u}-\bm{u}_h^0)|_{1,h}$ & order & $\|\curl_h(\bm{u}-\bm{u}_h^0)\|_0$  & order & $\|\bm{u}-\bm{u}_h^0\|_0$ & order \\
\midrule
$6$      &4.351E1     &       &1.548E0      &         &2.244E-1    &         \\
$12$     &2.166E1     &1.01   &4.096E-1     &1.92     &1.076E-1    &1.06     \\
$18$     &1.441E1     &1.01   &1.841E-1     &1.97     &7.105E-2    &1.03     \\
$24$     &1.080E1     &1.00   &1.039E-1     &1.99     &5.307E-2    &1.01     \\
$36$     &7.198E0     &1.00   &4.635E-2     &1.99     &3.528E-2    &1.01     \\
$48$     &5.397E0     &1.00   &2.610E-2     &2.00     &2.643E-2    &1.00     \\
\bottomrule
\end{tabular}
\caption{Numerical results of the original scheme (\ref{e: discrete formulation}) for different $n$.\label{t: original}}
\end{center}
\end{table}

\begin{table}[!htb]
\begin{center}
\begin{tabular}{p{0.5cm}<{\centering}p{2.8cm}<{\centering}p{0.8cm}<{\centering}p{2.8cm}<{\centering}p{0.8cm}
<{\centering}p{2.8cm}<{\centering}p{0.8cm}<{\centering}}
\toprule
$n$ & $|\curl_h(\bm{u}-\bm{u}_h)|_{1,h}$ & order & $\|\curl_h(\bm{u}-\bm{u}_h)\|_0$  & order & $\|\bm{u}-\bm{u}_h\|_0$ & order \\
\midrule
$6$      &4.332E1     &       &1.836E0      &         &2.344E-1    &         \\
$12$     &2.159E1     &1.00   &4.734E-1     &1.96     &1.081E-1    &1.12     \\
$18$     &1.439E1     &1.00   &2.118E-1     &1.98     &7.112E-2    &1.03     \\
$24$     &1.079E1     &1.00   &1.194E-1     &1.99     &5.309E-2    &1.02     \\
$36$     &7.194E0     &1.00   &5.317E-2     &2.00     &3.528E-2    &1.01     \\
$48$     &5.395E0     &1.00   &2.992E-2     &2.00     &2.643E-2    &1.00     \\
\bottomrule
\end{tabular}
\caption{Numerical results of the modified scheme (\ref{e: modified discrete formulation}) for different $n$.\label{t: modified}}
\end{center}
\end{table}

One can observe from Tables \ref{t: original} and \ref{t: modified} that the errors for both schemes measured in $|\curl_h\cdot|_{1,h}$, $\|\curl_h\cdot\|_0$ and $\|\cdot\|_0$ are all at least first order convergent,
which is consistent with (\ref{e: orginal err}). Moreover, as first discovered in \cite{Huang2023}, the convergence order in $\|\curl_h\cdot\|_0$ is indeed two.
We also remark that although the absolute errors of these two schemes are almost equal, but for the original one we do not know if the supercloseness can be guaranteed in theory.

\begin{table}[!htb]
\begin{center}
\begin{tabular}{p{0.5cm}<{\centering}p{3cm}<{\centering}p{0.8cm}<{\centering}p{3cm}<{\centering}p{0.8cm}
<{\centering}p{2.8cm}<{\centering}p{0.8cm}<{\centering}}
\toprule
$n$ & $|\curl_h(\bm{I}_h\bm{u}-\bm{u}_h)|_{1,h}$ & order & $\|\curl_h(\bm{I}_h\bm{u}-\bm{u}_h)\|_0$  & order & $\|\bm{I}_h\bm{u}-\bm{u}_h\|_0$ & order \\
\midrule
$6$      &1.092E1      &       &8.394E-1     &         &8.565E-2    &         \\
$12$     &2.577E0      &2.08   &2.092E-1     &2.00     &2.242E-2    &1.93     \\
$18$     &1.125E0      &2.04   &9.300E-2     &2.00     &1.000E-2    &1.99     \\
$24$     &6.284E-1     &2,02   &5.231E-2     &2.00     &5.636E-3    &2.00     \\
$36$     &2.778E-1     &2.01   &2.325E-2     &2.00     &2.507E-3    &2.00     \\
$48$     &1.559E-1     &2.01   &1.308E-2     &2.00     &1.410E-3    &2.00     \\
\bottomrule
\end{tabular}
\caption{Supercloseness of the modified scheme (\ref{e: modified discrete formulation}) using the interpolation operator $\bm{I}_h$.\label{t: supercloseness}}
\end{center}
\end{table}

\begin{table}[!htb]
\begin{center}
\begin{tabular}{p{0.5cm}<{\centering}p{3.1cm}<{\centering}p{0.8cm}<{\centering}p{3cm}<{\centering}p{0.8cm}
<{\centering}p{2.8cm}<{\centering}p{0.8cm}<{\centering}}
\toprule
$n$ & $|\curl_h(\bm{u}-\bm{I}_{3h}\bm{u}_h)|_{1,h}$ & order & $\|\curl_h(\bm{u}-\bm{I}_{3h}\bm{u}_h)\|_0$  & order & $\|\bm{u}-\bm{I}_{3h}\bm{u}_h\|_0$ & order \\
\midrule
$6$      &6.294E1      &       &1.040E0      &         &1.582E-1    &         \\
$12$     &1.833E1      &1.78   &1.998E-1     &2.38     &2.329E-2    &2.76     \\
$18$     &8.420E0      &1.92   &8.965E-2     &1.98     &9.825E-3    &2.13     \\
$24$     &4.790E0      &1.96   &5.107E-2     &1.96     &5.534E-3    &2.00     \\
$36$     &2.146E0      &1.98   &2.298E-2     &1.97     &2.480E-3    &1.98     \\
$48$     &1.210E0      &1.99   &1.299E-2     &1.98     &1.401E-3    &1.98     \\
\bottomrule
\end{tabular}
\caption{Superconvergence of the modified scheme (\ref{e: modified discrete formulation}) using the postprocessing operator $\bm{I}_{3h}$.\label{t: superconvergence}}
\end{center}
\end{table}

Let us turn to the superclose and superconvergence results.
It can be easily verified from Tables \ref{t: supercloseness} and \ref{t: superconvergence}
that $\bm{I}_h\bm{u}$ is a superclose interpolation of order two, as claimed in Theorem \ref{th: supercloseness}, and $\bm{I}_{3h}$ is a proper postprocessing operator, as asserted in
Theorem \ref{th: superconvergence}. Let us proceed to check the efficiency of $\bm{I}_{3h}$ in a more detailed manner.
Indeed, although the errors of the postprocessed and unprocessed solutions measured in $|\curl_h\cdot|_{1,h}$ are almost equal over a coarser mesh ($n\approx10$),
when the mesh is refined sufficiently,
the postprocessed solutions have a significant higher precision than that in Table \ref{t: modified}.
For instance, when $n=48$, the error is almost equal to that when $n\approx200$ if no postprocessing is carried out. Furthermore, the performance seems more attractive if the errors are measured in $L^2$-norm (e.g.~when $n=48$, 2.643E-2 vs.~1.401E-3).
This indicates high efficiency of our method.

\end{document}